\DeclareFontFamily{OT1}{rsfs}{}
\DeclareFontShape{OT1}{rsfs}{n}{it}{<-> rsfs10}{}
\DeclareMathAlphabet{\curly}{OT1}{rsfs}{n}{it}
\newcommand{\eqnum}{\refstepcounter{equation}\textup{\tagform@{\theequation}}}
\renewcommand\;{\hspace{.6pt}}
\newcommand\PP{\mathbb P}
\newcommand\LL{\mathbb L}
\newcommand\C{\mathbb C}
\newcommand\Z{\mathbb Z}
\newcommand\cO{\mathcal O}
\renewcommand\t{\mathfrak t}
\renewcommand\({\big(}
\renewcommand\){\big)}
\newcommand{\so}{\ \ext@arrow 0359\Rightarrowfill@{}{\hspace{3mm}}\ }
\newcommand{\rt}[1]{\xrightarrow{\ #1\ }}
\newcommand\To{\longrightarrow}
\newcommand\into{\hookrightarrow}
\newcommand\INTO{\ \ar@{^(->}[r]<-.2ex>}
\newcommand{\Into}{\ensuremath{\lhook\joinrel\relbar\joinrel\rightarrow}}
\renewcommand\_{^{}_}
\newcommand\take{\backslash}
\newfont{\bigtimesfont}{cmsy10 scaled \magstep5}
\newcommand{\bigtimes}{\mathop{\lower0.9ex\hbox{\bigtimesfont\symbol2}}}
\renewcommand\={\ =\ }
\newcommand\udot{^{\bullet}}
\newcommand\rk{\operatorname{rank}}
\newcommand\vir{\operatorname{vir}}
\newcommand\vd{\operatorname{vd}}
\newcommand\id{\operatorname{id}}
\newcommand\Proj{\operatorname{Proj}\;}
\newcommand\Spec{\operatorname{Spec}\;}
\newcommand\Sym{\operatorname{Sym}}
\newcommand\beq[1]{\begin{equation}\label{#1}}
\newcommand\eeq{\end{equation}}
\newcommand\beqa{\begin{eqnarray*}}
\newcommand\eeqa{\end{eqnarray*}}
\newcommand\arXiv[1]{\href{http://arxiv.org/abs/#1}{arXiv:#1}}
\newcommand\mathAG[1]{\href{http://arxiv.org/abs/math/#1}{math.AG/#1}}
\DeclareRobustCommand{\SkipTocEntry}[3]{}
\newcommand\@dotsep{4.5}
\def\@tocline#1#2#3#4#5#6#7{\relax
  \ifnum #1>\c@tocdepth 
  \else
    \par \addpenalty\@secpenalty\addvspace{#2}%
    \begingroup \hyphenpenalty\@M
    \@ifempty{#4}{%
      \@tempdima\csname r@tocindent\number#1\endcsname\relax
    }{%
      \@tempdima#4\relax
    }%
    \parindent\z@ \leftskip#3\relax \advance\leftskip\@tempdima\relax
    \rightskip\@pnumwidth plus1em \parfillskip-\@pnumwidth
    #5\leavevmode #6\relax
    \leaders\hbox{$\m@th
      \mkern \@dotsep mu\hbox{.}\mkern \@dotsep mu$}\hfill
    \hbox to\@pnumwidth{\@tocpagenum{#7}}\par
    \nobreak
    \endgroup
  \fi}
\makeatletter \@addtoreset{equation}{section} \makeatother
\renewcommand{\theequation}{\thesection.\arabic{equation}}
\newtheorem{thm}[equation]{Theorem}
\newtheorem*{thm*}{Theorem}
\newtheorem{lem}[equation]{Lemma}
\newtheorem{cor}[equation]{Corollary}
\newtheorem{prop}[equation]{Proposition}
\newtheorem*{defthm*}{Definition/Theorem}
\newtheorem{rmk}[equation]{Remark}
\title{A K-theoretic Fulton class\vspace{-2mm}}
\author{Richard P. Thomas\vspace{-3mm}}
\begin{document}
\maketitle
%


\thispagestyle{empty}
\section{Summary}
Fix a quasi-projective scheme $M$ over the complex numbers, and pick a global embedding in a smooth ambient variety $A$. Let $I\subset\cO_A$ denote the ideal sheaf of $M$. We get the cone on the embedding $M\into A$,
\beq{cone}
C_MA\ :=\ \Spec\,\bigoplus\nolimits_{i\ge0}\,I^i\big/I^{i+1}.
\eeq
Then Fulton's total Chern class of $M$ \cite[Example 4.2.6]{Fu} is defined to be
\beq{FHstar}
c\_F(M)\ :=\ c\(T_A|_M\)\cap s(C_MA)\ \in\ A_\bullet(M),
\eeq
where $c$ is the total Chern class and $s$ denotes the Segre class. The result is independent of the choice of embedding. When $M$ is smooth, $c\_F(M)$ is just the total Chern class $c(T_M)\cap[M]=\sum_{i\ge0}c_i(M)\cap[M]$ of $M$.

We define a K-theoretic analogue. For notation see Section \ref{kFul}; in particular $t$ denotes the class of the weight one irreducible representation of $\C^*$.
\begin{defthm*}
Let $\C^*$ act trivially on $M$, with weight $1$ on $\Omega_A|_M$, and with weight $i$ on $I^i/I^{i+1}$. The K-theoretic Fulton class
$$
\qquad\Lambda_M\ :=\ \Lambda\udot\;\Omega_A\big|_M\otimes\Big(\bigoplus\nolimits_{i\ge0}\,I^i\big/I^{i+1}\Big)\ \in\ K_0(M)[\![t]\!],
$$
is independent of the smooth ambient space $A\supset M$ and is \emph{polynomial in $t$,}
$$
\Lambda_M\=\sum_{i=0}^d(-1)^i\Lambda^i_Mt^i\ \in\ K_0(M)[t],
$$
where $d$ is the embedding dimension of $M$. When $M$ is smooth, $\Lambda^i_M=\Omega^i_M$.
\end{defthm*}

\noindent In Proposition \ref{ddR} we describe $\Lambda_M$ in de Rham terms for $M$ lci, but for more general $M$ we have not seen these classes in the literature. \medskip

Suppose $M$ has a perfect obstruction theory $E\udot\to\LL_M$ \cite{BF}  of virtual dimension $\vd:=\rk(E\udot)$. Then we get a virtual cycle \cite{BF} for which Siebert \cite{Sie} gave the following formula in terms of the Fulton class
\beq{bernd}
[M]^{\vir}\=\left[s\((E\udot)^\vee\)\cap c\_F(M)\right]_{\vd}\ \in\ A_{\vd}(M).
\eeq
A K-theoretic analogue is the following.

\begin{thm*} Given a perfect obstruction theory $E\udot\to\LL_M$ the virtual structure sheaf can be calculated in terms of the K-theoretic Fulton class $\Lambda_M$ as
\beq{back}
\cO^{\vir}_M\=\left[\frac{\Lambda_M}{\Lambda\udot(E\udot)}\right]_{t=1}\=\left[\frac{\Lambda_M}{\Lambda\udot\;\LL_M^{\!\vir}}\right]_{t=1}.
\eeq
\end{thm*}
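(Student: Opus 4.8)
The plan is to compute both sides through a single global model and to read the equality off a Koszul resolution. Since $\Lambda_M$ is independent of the ambient space by the Definition/Theorem, since $\Lambda\udot(E\udot)=\Lambda\udot\,\LL_M^{\vir}$ depends only on the K-theory class of the perfect obstruction theory (so the second equality in \eqref{back} is the definition $\LL_M^{\vir}:=E\udot$ and needs no proof), and since $\cO^{\vir}_M$ is independent of the resolution by \cite{BF}, I am free to pick convenient data. First I would choose a global embedding $M\into A$ in a smooth $A$ together with a global two-term resolution $E\udot=[E^{-1}\to E^0]$ of the obstruction theory, and then invoke the standard reduction (as in \cite{BF,Sie}) that arranges $E^0=\Omega_A|_M$, with $E\udot\to\LL_M=[I/I^2\to\Omega_A|_M]$ the identity in degree $0$. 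Writing $E_1:=(E^{-1})^\vee$ and $E_0:=(E^0)^\vee=T_A|_M$, the bundle stack of the obstruction theory is $\mathfrak E=h^1/h^0\big((E\udot)^\vee\big)=[E_1/E_0]$.

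The second step realises the cone $C_MA$ inside $E_1$. The surjection $E^{-1}\twoheadrightarrow I/I^2$ coming from the degree $-1$ part of the obstruction theory induces a surjection of graded algebras $\Sym(E^{-1})\twoheadrightarrow\bigoplus_{i\ge0}I^i/I^{i+1}$, hence a closed embedding $C_MA\into E_1$ of the normal cone as an $E_0$-invariant subcone, with $\mathfrak C_M=[C_MA/E_0]\subset[E_1/E_0]=\mathfrak E$ the intrinsic normal cone. Because the same bundle $E_0$ sits over both the cone and the zero section, the refined Gysin map defining $\cO^{\vir}_M=0^!_{\mathfrak E}[\cO_{\mathfrak C_M}]$ descends along the smooth atlas $E_1\to\mathfrak E$ to
$$
\cO^{\vir}_M\=0^!_{E_1}\big[\cO_{C_MA}\big]\ \in\ K_0(M),
$$
the derived restriction of $\cO_{C_MA}$ to the zero section $0\colon M\into E_1$; here $\dim C_MA=\dim A=\rk E^0$ and $\rk E_1=\rk E^{-1}$, so the result has the expected virtual dimension $\vd$.

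The third, computational, step is to resolve $\cO_M$ (the zero section) over $\cO_{E_1}=\Sym(E^{-1})$ by the Koszul complex $\Lambda\udot E^{-1}\otimes\Sym(E^{-1})$ and tensor with $\cO_{C_MA}=\bigoplus_i I^i/I^{i+1}=:R$. Weighting $E^{-1}$ by $1$ and $I^i/I^{i+1}$ by $i$ makes the Koszul differential $\C^*$-equivariant and each graded piece finite, so the graded Euler characteristic of the resulting complex is
$$
\Big(\sum_j(-1)^j\Lambda^jE^{-1}\,t^{\,j}\Big)\otimes\Big(\sum_i[I^i/I^{i+1}]\,t^{\,i}\Big)\=\Lambda\udot E^{-1}\otimes R\ \in\ K_0(M)[\![t]\!].
$$
Since $E^0=\Omega_A|_M$ gives $\Lambda\udot(E\udot)=\Lambda\udot\Omega_A|_M\big/\Lambda\udot E^{-1}$ and $\Lambda_M=\Lambda\udot\Omega_A|_M\otimes R$, this series is exactly $\Lambda_M/\Lambda\udot(E\udot)$. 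Setting $t=1$ forgets the $\C^*$-weight and collapses it to the total Euler characteristic $\sum_j(-1)^j[\cH_j]$ of the same complex, namely $0^!_{E_1}[\cO_{C_MA}]=\cO^{\vir}_M$, which is \eqref{back}.

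The one point needing genuine justification — and which I expect to be the main obstacle — is that the evaluation $[\,\cdot\,]_{t=1}$ is legitimate: a priori $\Lambda\udot E^{-1}\otimes R$ is only a power series in $t$. This is the K-theoretic shadow of the finiteness underlying Siebert's cohomological formula \eqref{bernd}, and it amounts to showing that the homology sheaves $\operatorname{Tor}^{\Sym(E^{-1})}_j(R,\cO_M)$ are supported in boundedly many weights. This holds because $C_MA$ meets the zero section only along the vertex $M$, so these $\operatorname{Tor}$-sheaves are coherent on $M$ and hence concentrated in finitely many gradings; equivalently $\Lambda\udot E^{-1}\otimes R$ is a genuine polynomial in $t$. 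The prototype is the smooth case, where $R=\bigoplus_i\Sym^iN^\vee$ with $N^\vee=I/I^2$ and the Koszul identity $\Lambda\udot N^\vee\otimes\sum_i\Sym^iN^\vee\,t^{\,i}=1$ makes the cancellation explicit, recovering $\cO^{\vir}_M=\cO_M$.
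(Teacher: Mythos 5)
Your steps 2 through 5 are correct and are, modulo packaging, exactly the paper's own proof: the paper also resolves the zero section of $E_1$ by the Koszul complex $\Lambda\udot E^{-1}$, also justifies setting $t=1$ by the fact that the resulting complex has cohomology sheaves supported on the $\C^*$-fixed vertex (hence bounded weights), and also concludes via the unit $\Lambda\udot\otimes\Sym\udot=1$. The gap is your step 1, and it is not a technicality, since everything else stands on it.

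The reduction you call standard --- replacing $E\udot$ by a quasi-isomorphic two-term complex of bundles with $E^0=\Omega_A|_M$, mapping to $[I/I^2\to\Omega_A|_M]$ by the identity in degree $0$ --- is not proved in \cite{BF} or \cite{Sie}, and for a fixed ambient $A$ it is false. Take $M$ a smooth projective surface and $A=M$, so that $I=0$ and $\tau^{[-1,0]}\LL_M=[0\to\Omega_M]$. Commutativity of the square forces the differential of any normalized representative $[F^{-1}\rt{\delta}\Omega_M]$ to factor through $I/I^2=0$, so $\delta=0$ and the representative is the split object $F^{-1}[1]\oplus\Omega_M$. But a perfect complex $E\udot$ with $h^0=\Omega_M$ and $h^{-1}=N$ splits if and only if the class $\epsilon\in\Ext^2_M(\Omega_M,N)$ of its defining triangle $N[1]\to E\udot\to\Omega_M\rt{\epsilon}N[2]$ vanishes, and by Serre duality $\Ext^2_M(\Omega_M,N)\cong\Hom(N,\Omega_M\otimes\omega_M)^\vee\neq0$ for $N=\Omega_M\otimes\omega_M$. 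Any $\epsilon\neq0$ gives a genuine perfect obstruction theory (its cohomology sheaves are bundles, so it is a two-term complex of bundles, and truncation $E\udot\to h^0(E\udot)=\Omega_M=\LL_M$ is a perfect obstruction theory map) admitting \emph{no} representative of your normal form. Nor does re-embedding obviously rescue you: the obstruction to your normal form is the image of $\epsilon$ under $\Hom_{D(M)}\big(\tau^{[-1,0]}\LL_M,Q[2]\big)\to\Ext^2(\Omega_A|_M,Q)$, where $Q=\ker\big(h^{-1}(E\udot)\to h^{-1}(\LL_M)\big)$, and you would have to prove this class can be killed by a suitable choice of smooth $A\supset M$ --- precisely the kind of statement that needs an argument, not a citation.

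This is exactly the difficulty the paper's proof is organized to avoid: it never normalizes $E^0$. It quotes Siebert \cite[proof of Prop.\ 4.4]{Sie} for the statement that, after a quasi-isomorphic replacement (which only needs a genuine map of complexes with $E^0\to\Omega_A|_M$ surjective, i.e.\ $T_A|_M\Into E_0$ --- a much weaker normalization than yours), the Behrend--Fantechi cone $C\subset E_1$ sits in the exact sequence of cones \eqref{SSES}, $0\To T_A|_M\To C_MA\oplus E_0\To C\To0$. The extra factor this introduces into $\pi_*\cO_C$, namely $\Sym\udot E^0$, is then cancelled against the $1/\Lambda\udot E^0$ hidden in $1/\Lambda\udot(E\udot)$. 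If you replace your steps 1--2 by this input and run your (correct) Koszul-and-weights argument on $C\subset E_1$ rather than on $C_MA$, you recover the paper's proof of \eqref{back} verbatim.
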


Siebert's formula \eqref{bernd} showed that $[M]^{\vir}$ depends only on the scheme structure of $M$ and the K-theory class $[E\udot]$ of the virtual cotangent bundle $\LL_M^{\!\vir}:=E\udot$ (and not the map $E\udot\to\LL_M$ defining the perfect obstruction theory). Similarly the above Theorem implies that $\cO^{\vir}_M$ also depends only on $M$ and $[E\udot]$. One aspect of the analogy we refer to is that --- with some work --- \eqref{bernd} follows from \eqref{back} by the virtual Riemann-Roch theorem of \cite{CFK,FG}. For further discussion of the analogy between cohomological and K-theoretic Chern classes and Fulton classes, see Section \ref{KCh}.
\medskip

To understand why $\Lambda_M$ might be polynomial in $t$, consider what happens in the case that $M$ is a zero dimensional Artinian scheme. (The general case is a family version of this.) By \cite[Section 11]{AM} the Hilbert series of the graded $\cO_M$-module $\bigoplus_{i\ge0}I^i/I^{i+1}$ is $p(t)/(1-t)^d$, where $d=\dim A$ and $p$ is polynomial in $t$. But $\Omega_A|_M\cong\cO_M^{\oplus d}\otimes t$ so tensoring by $\Lambda\udot\;\Omega_A|_M=(1-t)^d\cO_M$ gives the result. \medskip

\noindent\textbf{Acknowledgements.} It is an honour to dedicate this paper to Bill Fulton on the occasion of his 80th birthday. Much of my career has depended crucially on the masterpiece \cite{Fu}. You should never meet your heroes, but I did and it was a genuine pleasure.

Thanks to Ben Antieau for suggesting Proposition \ref{ddR} and Dave Anderson for lengthy discussions on the link to K-theoretic Chern and Segre classes (see Section \ref{KCh}). I am especially grateful to an alert referee, Jason Starr and Will Sawin for correcting my false assumptions about embedding dimension \cite{SS}. I acknowledge support from EPSRC grant EP/R013349/1.

\section{K-theoretic analogue of Fulton's Chern class}
\label{kFul}
Throughout this paper we fix a quasi-projective scheme $M$ over $\C$, endow it with the \emph{trivial} $\C^*$ action, and work with the $\C^*$-equivariant K-theory of coherent sheaves on $M$. This is
$$
K_0(M)^{\C^*}\ \cong\ K_0(M)[t,t^{-1}]
$$
as a module over $K(\mathrm{pt})^{\C^*}=\Z[t,t^{-1}]$, where $t$ is the class of the weight one irreducible representation of $\C^*$. In fact we only ever use only the subgroup $K_0(M)[t]$ generated by coherent sheaves with $\C^*$ actions \emph{with nonnegative weights}, and its completion
$$
K_0^{\C^*}\!(M)\_{\ge0}\otimes\_{\Z[t]}\Z[\![t]\!]\=K_0(M)[\![t]\!].
$$
For $E$ locally free we use $\Lambda\udot E$ to refer to the K-theory class $\sum_{i=0}^{\rk E}(-1)^i\Lambda^iE$. If $E$ carries a $\C^*$ action then on the total space of $\pi\colon E^*\to M$ the tautological section of $\pi^*E^*$ is $\C^*$-equivariant, giving an equivariant Koszul resolution $\Lambda\udot\pi^*E\rt\sim\cO_M$ and so the identity $\Lambda\udot\pi^*E=\cO_M$ in $K_0(E^*)^{\C^*}$. If $E$ has only strictly positive weights we can apply $\pi_*\colon K_0(E^*)^{\C^*}\to K_0(M)[\![t]\!]$ to this, giving $\Lambda\udot E\otimes\Sym\udot\!E=\cO_M$. That is, $\Lambda\udot E=1/\Sym\udot E$ in $K_0(M)[\![t]\!]$.

\begin{rmk} The results here commute with operations such as localisation with respect to a nontrivial $\C^*$ action on $M$ (as used in \cite{Th}, for instance) since any such $\C^*$ action commutes with the trivial  $\C^*$ action used here.
\end{rmk}

Let $\C^*$ act on $\Omega_A|_M$ with weight $1$ and on $I^i/I^{i+1}$ with weight $i$. Consider
\beq{FuK}
\qquad\Lambda_M\ :=\ \Lambda\udot\;\Omega_A\big|_M\otimes\Big(\bigoplus\nolimits_{i\ge0}\,I^i\big/I^{i+1}\Big)\ \in\ K_0(M)[\![t]\!].
\eeq

\begin{thm}\label{thm}
This $\Lambda_M$ is independent of the smooth ambient space $A\supset M$ and is \emph{polynomial in $t$,} defining the K-theoretic Fulton classes
$$
\Lambda_M\=\sum_{i=0}^d(-1)^i\Lambda^i_Mt^i\ \in\ K_0(M)[t].
$$
Here $d$ is the embedding dimension of $M$. When $M$ is smooth, $\Lambda^i_M=\Omega^i_M$.
\end{thm}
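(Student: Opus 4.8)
The plan is to establish the three assertions in increasing order of difficulty, since each informs the next. First the smooth case, which is both the easiest and the guiding heuristic. When $M$ is smooth, $M\into A$ is a regular embedding, so $I/I^2$ is locally free (of weight $1$) and $\bigoplus_i I^i/I^{i+1}=\Sym\udot(I/I^2)$. By the Koszul relation $\Sym\udot=1/\Lambda\udot$ of Section \ref{kFul} we get $\Lambda_M=\Lambda\udot(\Omega_A|_M)\otimes\Sym\udot(I/I^2)=\Lambda\udot(\Omega_A|_M)\big/\Lambda\udot(I/I^2)$, and the conormal sequence $0\to I/I^2\to\Omega_A|_M\to\Omega_M\to0$ gives $[\Omega_A|_M]=[I/I^2]+[\Omega_M]$ in $K_0(M)$. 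Hence $\Lambda_M=\Lambda\udot\Omega_M=\sum_i(-1)^i\Omega^i_Mt^i$, which verifies $\Lambda^i_M=\Omega^i_M$ and the degree bound $\dim M$; it also suggests that in general one should ``divide'' the symmetric‑algebra Hilbert series of the cone by the Koszul factor $\Lambda\udot\Omega_A|_M$.

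For independence of $A$, given two embeddings $M\into A$ and $M\into A'$ I would compare both with the diagonal embedding $M\into A\times A'$ (a closed immersion, since its composite with the separated projection to $A$ is one), into the smooth space $A\times A'$. It then suffices to prove the following: if $p\colon\tilde A\to A$ is a smooth morphism of smooth varieties of relative dimension $e$ and $M\into\tilde A$ is closed with $p|_M\colon M\into A$ also closed, then $\Lambda_M$ computed in $\tilde A$ equals that computed in $A$. The relative cotangent sequence $0\to\Omega_A|_M\to\Omega_{\tilde A}|_M\to\Omega_{\tilde A/A}|_M\to0$, with $\Omega_{\tilde A/A}$ locally free of rank $e$, gives $\Lambda\udot\Omega_{\tilde A}|_M=\Lambda\udot\Omega_A|_M\cdot\Lambda\udot\Omega_{\tilde A/A}|_M$, so by the same $\Sym\udot=1/\Lambda\udot$ cancellation everything reduces to the cone identity
$$\bigoplus\nolimits_n\tilde I^{\,n}/\tilde I^{\,n+1}\=\Big(\bigoplus\nolimits_a I^a/I^{a+1}\Big)\otimes\Sym\udot\big(\Omega_{\tilde A/A}|_M\big)\quad\text{in }K_0(M)[\![t]\!].$$

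This cone identity I would prove via the canonical $J$-adic filtration, where $J:=I\cdot\cO_{\tilde A}$ is the ideal of $p^{-1}(M)$; the section $M\into\tilde A$ realises $M\into p^{-1}(M)$ as a regular embedding of codimension $e$ with conormal bundle $\Omega_{\tilde A/A}|_M$. Étale‑locally, $p$ together with relative coordinates identifies $\tilde A\cong A\times\C^e$ carrying $M$ to the zero section, so $\tilde I=I\cO_{\tilde A}+(y_1,\dots,y_e)$ and $\operatorname{gr}_{\tilde I}\cO_{\tilde A}\cong(\operatorname{gr}_I\cO_A)[Y_1,\dots,Y_e]$, which is exactly the claimed bigraded splitting. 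Because the filtration and its graded pieces are intrinsic, the resulting equality of coherent sheaves, and hence of $K$‑theory classes, is independent of the local trivialisation and globalises.

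The hard part is polynomiality, together with the sharp degree bound $d$. Using independence I would reduce, locally on $M$, to the case $\dim A=d$ equal to the embedding dimension, where moreover $\Omega_A|_M$ is trivial so $\Lambda\udot\Omega_A|_M=(1-t)^d$. Since $C_MA$ has pure dimension $d$ with fibres over $M$ of dimension at most $d$ (bounded by the analytic spread), a general $\cO_M$‑linear projection makes $\bigoplus_i I^i/I^{i+1}$ a finite graded module over a polynomial ring $\cO_M[Z_1,\dots,Z_n]$ with $n\le d$; the $K_0(M)$‑valued Hilbert--Serre theorem --- proved by the usual induction on $n$, with dimensions replaced by classes in the four‑term exact sequence given by multiplication by $Z_n$ --- then yields $\sum_i[I^i/I^{i+1}]t^i=Q(t)/(1-t)^n$ with $Q\in K_0(M)[t]$, so $\Lambda_M=(1-t)^{d-n}Q(t)$ is polynomial of degree $\le d$. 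The genuine obstacle is that this reduction and the Noether normalisation of the cone are only local, whereas the coefficients $\Lambda^i_M$ live in the non‑local group $K_0(M)$; bridging the gap --- either by constructing a global finite projection of $C_MA$, or by exhibiting $(-1)^n\Lambda^n_M$ as the Euler characteristic of a globally defined Koszul‑type complex on $M$ that is acyclic for $n>d$ --- is where the real work lies, and presumably where the delicacy about embedding dimension flagged in the acknowledgements enters.
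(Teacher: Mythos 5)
Your smooth case and your independence argument are correct and essentially coincide with the paper's: the paper also passes through the diagonal embedding $M\subset A_1\times A_2$ and invokes Fulton's exact sequence of cones $0\to T_{A_2}|_M\to C_{12}\to C_1\to 0$, whose induced filtration of $I_{12}^i/I_{12}^{i+1}$ is exactly your cone identity (your \'etale-local computation in effect reproves that sequence rather than citing it). The smooth case is the same conormal-sequence cancellation in both.

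The genuine gap is in polynomiality --- which is the real content of the theorem --- and it is the one you flag yourself, plus a second one you do not. First, locality: the generic degree-one elements giving a Noether normalisation of $\bigoplus_i I^i/I^{i+1}$, and a trivialisation of $\Omega_A|_M$, exist only on affine opens of $M$, and $K_0(M)$ has no local-to-global principle; classes do not glue from an open cover, so ``polynomial after restriction to each open'' neither makes sense for, nor implies anything about, $\Lambda_M\in K_0(M)[\![t]\!]$. Second, even granting the local reduction, Hilbert--Serre gives a series $Q(t)/(1-t)^n$ with \emph{no bound on} $\deg Q$ (it depends on the degrees of generators and relations of the module, not on $n$; compare $k[Z]/(Z^N)$ over $k[Z]$), so $(1-t)^{d-n}Q(t)$ would be polynomial but not visibly of degree $\le d$: the degree bound in the theorem would not follow. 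The paper's proof constructs precisely the global object you defer to: writing $\Lambda_M=\pi_*\pi^*\Lambda\udot\;\Omega_A|_M$ for $\pi\colon C_MA\to M$, it shows $\pi^*\Lambda\udot\;\Omega_A|_M$ equals, in $K_0^{\C^*}(C_MA)$, a class pushed forward from $\C^*$-fixed loci with weights in $[0,d]$. Since $T_A|_M$ has no global sections in general, it twists: setting $E=T_A|_M\otimes L$ with $L\gg0$ and $D\in|L|$, it expands $\Lambda\udot\;\Omega_A|_M$ into terms $\cO_D^{\otimes i}\otimes(\cdots)$, chooses transverse intersections $D^{(i)}=D_1\cap\dots\cap D_i$, and uses global generation of $q^*E(1)$ on $\PP(C_MA)\times_M D^{(i)}$ (a dimension count yields $i+1$ linearly independent sections degenerating scheme-theoretically exactly on the zero section) to build generalised Koszul complexes that are exact away from $D^{(i)}\subset\pi^*D^{(i)}$. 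Their cohomology sheaves are therefore supported on $\C^*$-fixed loci, and since the $\Lambda^kE^*$ term carries weight $k\in[0,d]$, all weights lie in $[0,d]$ --- giving polynomiality and the degree bound simultaneously. This twisting-by-$L$ construction is the missing idea; your proposal correctly locates where the work lies but does not carry it out.
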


\noindent\textbf{Remark.} By ``embedding dimension" we mean the smallest dimension of a smooth variety $A$ containing $M$. It is natural to wonder if we can replace this in the theorem by the maximum of the dimensions of the Zariski tangent spaces $T_xM$ over all closed points $x\in M$. This number is in general smaller than the embedding dimension \cite{SS}.
\medskip

\begin{proof}[Proof of Theorem]
Fix two embeddings $M\subset A_i,\ i=1,2$ with ideal sheaves $I_i$ and cones $C_i:=C_MA_i$. We get the induced diagonal inclusion
$$
M\ \subset\ A_1\times A_2
$$
with ideal $I_{12}$ and cone $C_{12}:=C_M(A_1\times A_2)$.
This gives the exact sequence of cones \cite[Example 4.2.6]{Fu},
\beq{FuSES}
0\To T_{A_2}\big|_M\To C_{12}\To C_1\To0.
\eeq
That is, $I_{12}^i/I_{12}^{i+1}$ has an increasing filtration beginning with $I_1^i/I_1^{i+1}$ and with graded pieces $\Sym^j\;\Omega_{A_2}|_M\otimes I_1^{i-j}/I_1^{i-j+1}$. Therefore,
in completed equivariant K-theory,
\begin{align*}
\Lambda\udot\Omega_{A_1\times A_2}\big|_M&\otimes\Big(\bigoplus\nolimits_{i\ge0}\,I_{12}^i\big/I_{12}^{i+1}\Big) \\
&=\ \Lambda\udot\Omega_{A_1\times A_2}\big|_M\otimes\Big(\bigoplus_{i,j\ge0}\Sym^j\Omega_{A_2}\big|_M\otimes I_1^{i-j}\big/I_1^{i-j+1}\Big) \\ &=\ 
\Lambda\udot\Omega_{A_1}\big|_M\otimes\Lambda\udot\Omega_{A_2}\big|_M\otimes\Sym\udot\Omega_{A_2}\big|_M\otimes\bigoplus_{i\ge0}I_1^i\big/I_1^{i+1} \\ &=\
\Lambda\udot\Omega_{A_1}\big|_M\otimes\bigoplus\nolimits_{i\ge0}\,I_1^i\big/I_1^{i+1}.
\end{align*}

This gives the independence from $A$. We now show \eqref{FuK} is polynomial in $t$ of degree $\le d:=\dim A$.

Writing \eqref{FuK} as $\pi_*\pi^*\Lambda\udot\;\Omega_A|_M$, where 
$\pi\colon C_MA\to M$ denotes the projection, the power series in $t$ comes from the fact that $\pi$ is not proper. So we only need to show that $\pi^*\Lambda\udot\;\Omega_A|_M$ is equivalent in $K^{\C^*}_0(C_AM)$ to a class pushed forward from $M$ with $\C^*$ weights in $[0,d]$.

The basic idea of the proof is the following. Suppose we could pick a $\C^*$-invariant section $s$ of $\pi^*T_A|_M$ which --- on the complement $C_MA\take M$ of $M$ --- has vanishing locus of the expected dimension 0. By $\C^*$ invariance this locus is empty, so the vanishing locus of $s$ is just $M$ (on which $s$ must be zero since $T_A|_M$ has weight $-1$). Therefore the Koszul complex $\pi^*\Lambda\udot\;\Omega_A|_M$ is exact on $C_MA\take M$. It is equivalent in K-theory to its cohomology sheaves which are supported on $M\subset C_MA$ and have $\C^*$ weights in $[0,d]$. This gives the result required.

In general there only exist sections of $\pi^*T_A|_M$ \emph{with poles} so we have to twist by a divisor $D$ in the proof below, complicating matters. The reader is encouraged to read it first in the no-poles case --- where $i=0$, there are no $\cO_D$ terms in \eqref{codimm}, and everything reduces to the Koszul complex \eqref{kzz}. The general case involves the generalised Koszul complexes \eqref{kz2}.

So pick a line bundle $L\gg0$ on $M$ and $D\in|L|$. We work with
$$
E\ :=\ T_A\big|_M\otimes L
$$
since it has sections which we can use to cut down to lower dimensions. We give $L$ the trivial $\C^*$ action of weight 0 so that $E$ has weight $-1$. We have
$$
\Lambda^k\Omega_A\big|_M\=\Lambda^kE^*\otimes L^k\=\Lambda^kE^*\otimes(\cO-\cO_D)^{-k}.
$$
Using $\otimes$ for the \emph{derived} tensor product, in K-theory we deduce\footnote{When $k=0=i$ we have to work with the standard negative binomial convention that ${k+i-1\choose i}={-1\choose0}=1$. Therefore we also set $\Sym^{-1}\cO^{i+1}$ to be $\cO$ for $i=0$ and 0 for $i>0$.\label{foot}}
\begin{eqnarray} \nonumber
\Lambda\udot\;\Omega_A\big|_M &=& \sum_{k=0}^d(-1)^k\Lambda^kE^*\otimes\sum_{i=0}^d{k+i-1\choose i}\cO_D^{\otimes i} \\
&=& \sum_{i=0}^d\cO_D^{\otimes i}\otimes\sum_{k=0}^d(-1)^k\Lambda^kE^*\otimes\Sym^{k-1}\cO^{\;i+1}. \label{codimm}
\end{eqnarray}
Here we have used $[\cO_D^{\otimes i}]=0$ for $i>d\ge\dim M$. For $L\gg0$ basepoint free this follows from taking divisors $D_1,\ldots,D_{d+1}\in|L|$ with empty intersection.

More generally, fix any $i\ge0$. Since the sections $\pi^*H^0(L)$ of $\pi^*L$ are basepoint free, we may pick generic divisors
$D_1,\ldots,D_i$ such that their intersection $D^{(i)}:=D_1\cap\dots\cap D_i$, and the intersection $\pi^*D^{(i)}$ of their pullbacks $\pi^*D_i$, are \emph{transverse}. That is, we have an equality of \emph{derived} tensor products
$$
\cO_{D_1}\otimes\dots\otimes\cO_{D_i}\=\cO_{D^{(i)}} \quad\text{and}\quad
\cO_{\pi^*D_1}\otimes\dots\otimes\cO_{\pi^*D_i}\=\cO_{\pi^*D^{(i)}}.
$$
Therefore by \eqref{codimm} we can write $\pi^*\Lambda\udot\Omega_A|_M$ as the sum over $i=0,\dots,d$ of
\beq{thing}
\sum_{k=0}^d(-1)^k\pi^*\Lambda^kE^*\big|_{D^{(i)}}\otimes\Sym^{k-1}\cO^{\;i+1}_{\pi^*D^{(i)}}.
\eeq
So we are left with showing \eqref{thing} is equal, in equivariant K-theory, to a class pushed forward from $D^{(i)}\subset\pi^*D^{(i)}$ with $\C^*$ weights in $[0,d]$.

Consider the projectivised cone $\PP(C_MA)=\Proj\,\bigoplus_{j\ge0}I^j/I^{j+1}$ and its fibre product
\beq{fib}
\PP(C_MA)\times\_MD^{(i)}\rt qD^{(i)}.
\eeq
It has dimension $d-i-1$, while $q^*E(1)$ has rank $d$ and is globally generated. Therefore $q^*E(1)$ has $i+1$ linearly independent sections on \eqref{fib} by a standard argument (see \cite[page 148]{Mu}, for instance.)

Equivalently, $\pi^*\(E|\_{D^{(i)}}\)$ has $i+1$ $\C^*$-equivariant sections which are  linearly independent away from the zero section $D^{(i)}\subset\pi^*D^{(i)}$ and whose scheme-theoretic degeneracy locus is precisely $D^{(i)}\subset\pi^*D^{(i)}$ (i.e. its 0\;th Fitting ideal is the irrelevant ideal $\bigoplus_{j>0}I^j/I^{j+1}\subset\cO_{\pi^*D^{(i)}}$).

Therefore, taking $i=0$ to begin with, we get one equivariant section of $\pi^*E$ and a Koszul complex
\beq{kzz}
\pi^*\Lambda^d E^*\To\pi^*\Lambda^{d-1}E^*\To\dots\To\pi^*E^*\To\cO_{C_MA}
\eeq
with cokernel $h^0\cong\cO_M$. Considering it as a sheaf of dgas, its other cohomology sheaves are modules over $h^0$, so they are also supported scheme-theoretically on $M\subset C_MA$. Since $M$ is $\C^*$-fixed, and $\C^*$ acts on the $\Lambda^kE^*$ term of \eqref{kzz} with weight $k\in[0,d]$, this shows the (pushdown to $M$ of) the cohomology sheaves of \eqref{kzz} have $\C^*$ weights in $[0,d]$. Therefore, for $i=0$, its K-theory class \eqref{thing} has $\C^*$ weights in $[0,d]$.

For $i\ge1$ we use the generalised Koszul complex
\begin{multline}\label{kz2}
\pi^*\Lambda^dE^*|\_{D^{(i)}}\otimes\Sym^{d-1}\cO_{\pi^*D^{(i)}}^{\;i+1}\To\pi^*\Lambda^{d-1}E^*|\_{D^{(i)}}\otimes\Sym^{d-2}\cO_{\pi^*D^{(i)}}^{\;i+1} \\
\To\cdots\To\pi^*\Lambda^2E^*|\_{D^{(i)}}\otimes\cO_{D^{(i)}}^{\;i+1}\To\pi^*E^*\big|_{D^{(i)}}.
\end{multline}
It is exact away from $D^{(i)}\subset\pi^*D^{(i)}$, since its twist by $\pi^*\Lambda^dE\big|_{D^{(i)}}$ is the obvious resolution of $\Lambda^{d-1}Q$, where $Q$ is the locally free quotient $0\to\cO^{i+1}\to\pi^*E|\_{D^{(i)}}\to Q\to0$; since rank$\,Q=d-i-1<d-1$, this is zero. Furthermore, its cohomology sheaves are well-known\footnote{Nonetheless, I couldn't find a decent reference --- there's something terribly wrong with the algebra literature. On $\rho\colon\PP^i\times\pi^*D^{(i)}\to\pi^*D^{(i)}$ the $i+1$ sections of $\pi^*E$ give a section of $\rho^*\pi^*E(1)$ with scheme-theoretic zero locus $\PP^i\times D^{(i)}\subset\PP^i\times\pi^*D^{(i)}$. Therefore the Koszul complex $\Lambda\udot(\rho^*\pi^*E^*(-1))$ of this section has cohomology sheaves supported on $\PP^i\times D^{(i)}$. Applying $R\rho_*\(\ \cdot\ \otimes\cO(-i)\)[i]$ gives \eqref{kz2} and the result claimed.} to be supported scheme-theoretically on $D^{(i)}$.

Since $D^{(i)}$ is $\C^*$-fixed, and $\C^*$ acts on the $\Lambda^kE^*$ term of \eqref{kz2} with weight $k$ for $k=0,1,\ldots,d$, this shows the (pushdown to $D^{(i)}$ of) the cohomology sheaves of \eqref{kz2} have $\C^*$ weights in $[0,d]$. Therefore the K-theory class \eqref{thing} of \eqref{kz2} has $\C^*$ weights in $[0,d]$.
\end{proof}

If we pick a locally free resolution $F^*\rt{s}I$ on $A$ (i.e. a vector bundle $F\to A$ with a section $s$ cutting out $s^{-1}(0)=M\subset A$) then we can express $\Lambda_M$ differently as follows. Give $F$ the $\C^*$ action of weight $-1$, so the embedding
\beq{coneinto}
C_MA\ \Into\ F|_M
\eeq
induced by $s$ is equivariant. Let $\iota\colon M\into F|_M$ and $\pi\colon F|_M\to M$ denote the zero section and projection respectively. 

\begin{lem}
The K-theoretic Fulton class equals
$$
\Lambda_M\=L\iota^*\cO\_{C_MA}\otimes\left.\frac{\Lambda\udot\Omega_A}{\Lambda\udot F^*}\right|_M.
$$
\end{lem}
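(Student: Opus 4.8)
The plan is to reduce the whole statement to the Koszul resolution of the zero section $\iota$, in the same spirit as the computations at the start of this section. Write $R:=\bigoplus_{i\ge0}I^i/I^{i+1}$, so that by \eqref{FuK} we have $\Lambda_M=\Lambda\udot\Omega_A|_M\otimes R$. Since $C_MA$ is affine over $M$ and the embedding \eqref{coneinto} is compatible with the projections to $M$, pushing $\cO_{C_MA}$ down the projection $\pi\colon F|_M\to M$ recovers $R=\pi_*\cO_{C_MA}$, with its weight-$i$ piece $I^i/I^{i+1}$. Tensoring by $\Lambda\udot\Omega_A|_M$ at the end, the Lemma therefore reduces to the single identity
$$
\pi_*\cO\_{C_MA}\=L\iota^*\cO\_{C_MA}\otimes\Sym\udot F^*\big|_M\=L\iota^*\cO\_{C_MA}\,\big/\,\Lambda\udot F^*\big|_M
$$
in $K_0(M)[\![t]\!]$, the last equality being $\Sym\udot F^*=1/\Lambda\udot F^*$, which is legitimate because $F^*$ has strictly positive weight $+1$.

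To prove this I would compute the right hand side by the equivariant Koszul resolution of $\iota\colon M\into F|_M$. This is a regular embedding with conormal bundle $F^*|_M$ of weight $+1$, so $\iota_*\cO_M$ is resolved by $\Lambda\udot\pi^*F^*$. As everything in sight is affine over $M$, I would identify sheaves on $F|_M$ with graded $\Sym\udot F^*|_M$-modules via $\pi_*$; under this identification $\cO_{C_MA}$ becomes $R$ and $L\iota^*$ becomes $-\Ltimes_{\Sym\udot F^*|_M}\cO_M$. Feeding $R$ through the Koszul resolution of $\cO_M$ then computes $L\iota^*\cO_{C_MA}$ as the complex
$$
L\iota^*\cO\_{C_MA}\=R\otimes_{\cO_M}\Lambda\udot F^*\big|_M\ \in\ K_0(M)[\![t]\!].
$$
Because each $\Lambda^kF^*$ is locally free this tensor product is multiplicative in K-theory, so the class equals $[R]\otimes\Lambda\udot F^*|_M$; dividing by the unit $\Lambda\udot F^*|_M$ gives the displayed identity and hence the Lemma.

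The point needing care is the interaction of the non-proper pushforward $\pi_*$ with the derived pullback $L\iota^*$. Since $\pi$ is not proper, $\pi_*$ only lands in the completion $K_0(M)[\![t]\!]$, so I would run the whole computation graded by $\C^*$-weight, checking that the Koszul differentials are equivariant of weight $0$ and that in each fixed weight only finitely many terms contribute; this makes both the Koszul computation and the division by $\Lambda\udot F^*|_M=\cO_M+O(t)$ well defined weight by weight. Note that no local freeness of $R$ over $\cO_M$ is used, only that the $\Lambda^kF^*$ are locally free, so the argument holds for arbitrary singular $M$; as a consistency check, the finiteness of $L\iota^*\cO_{C_MA}$ forces the a priori power series $[R]\otimes\Lambda\udot F^*|_M$ to collapse to a polynomial, recovering the polynomiality of Theorem \ref{thm} from this resolution.
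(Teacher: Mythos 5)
Your proof is correct and is essentially the paper's own argument in different bookkeeping: both hinge on computing $L\iota^*\cO_{C_MA}$ via the equivariant Koszul resolution of the zero section $M\Into F|_M$ (yielding the class $\big[\bigoplus_{i\ge0}I^i/I^{i+1}\big]\otimes\Lambda\udot F^*|_M$) and then cancelling the unit $\Lambda\udot F^*|_M$. The paper packages this as $\pi_*\iota_*=\id$ plus the projection formula $\iota_*L\iota^*\cO_{C_MA}=\cO_{C_MA}\Ltimes\iota_*\cO_M$, whereas you pass to graded $\Sym\udot F^*|_M$-modules and resolve $\cO_M$ directly, which is the same Tor computation.
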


\begin{proof}
Applying $\pi_*\iota_*=\id$ to the right hand side gives
$$
\pi_*\left(\iota_*\cO_M\stackrel L\otimes\cO\_{C_MA}\right)\otimes\left.\frac{\Lambda\udot\Omega_A}{\Lambda\udot F^*}\right|_M.
$$
Then $\iota_*\;\cO_M=\pi^*\Lambda\udot F^*|_M$ by the Koszul resolution of the zero section $M\into F|_M$, so by \eqref{cone} we get
\[
\Lambda_M\=\Big(\bigoplus\nolimits_{i\ge0}\,I^i/I^{i+1}\Big)\otimes\Lambda\udot\Omega_A\big|_M\,. \qedhere
\]
\end{proof}

\section{de Rham cohomology}
Consider the pushforward of $\Lambda_M$ to the formal completion $\widehat A$ of $A$ along $M$. Its K-theory class looks remarkably similar to that of Hartshorne's algebraic de Rham complex \cite{Ha}
\beq{ahat}
\big(\Lambda\udot\;\Omega_{\widehat A},d\big).
\eeq
If we discard the de Rham differential in \eqref{ahat} and filter by order of vanishing along $M$ (with $n$th filtered piece $I^n\otimes\Lambda\udot\;\Omega_{\widehat A}$) then the associated graded is
\beq{agr}
\Lambda\udot\;\Omega_A\big|_M\otimes\Big(\bigoplus\nolimits_{i\ge0}\,I^i\big/I^{i+1}\Big).
\eeq
This is just (the push forward from $M$ to $\widehat A$ of) $\Lambda_M$ with its $\C^*$ action forgotten. However convergence issues stop us from equating \eqref{agr} with \eqref{ahat} in K-theory. Putting the $\C^*$ action back into \eqref{agr} we get convergence to $\Lambda_M$ in completed equivariant K-theory, but in general there is no $\C^*$ action on \eqref{ahat}.

The algebraic de Rham complex \eqref{ahat} --- with the de Rham differential --- has been shown by Illusie \cite[Corollary VIII.2.2.8]{Ill} (and more generally Bhatt \cite{Bh}) to be quasi-isomorphic to the pushforward of the \emph{derived de Rham complex} $\Lambda\udot\;\LL_M$ of $M$. (Here $\Lambda\udot$ denotes the alternating sum of \emph{derived} exterior powers.) And, as suggested to us by Ben Antieau, we can prove that the K-theory class of $\Lambda\udot\;\LL_M$ (again \emph{without} its de Rham differential) can be identified with $\Lambda_M$ when $M$ is a local complete intersection.\footnote{When $M$ is not lci $\LL_M$ will have homology in infinitely many degrees in general (even before we take exterior powers), so it is not clear how to define a K-theory class for $\Lambda\udot\LL_M$.}

\begin{prop}\label{ddR} Let $\C^*$ act on $\LL_M$ with weight 1, and suppose $M$ is lci. Then $\Lambda_M=\Lambda\udot\;\LL_M$ in $K_0(M)[\![t]\!]$.
\end{prop}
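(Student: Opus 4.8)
The plan is to reduce Proposition \ref{ddR} to the definition \eqref{FuK} of $\Lambda_M$ by exploiting the special structure of the cotangent complex in the lci case. When $M$ is lci, any embedding $M\into A$ into a smooth variety is a regular embedding, so the cotangent complex $\LL_M$ is quasi-isomorphic to the two-term complex of locally free sheaves
\[
\LL_M\ \simeq\ \big[\,I/I^2\rt{d}\Omega_A\big|_M\,\big],
\]
placed in degrees $-1$ and $0$, where $I/I^2$ is the conormal bundle $N^*_{M/A}$ (locally free of rank $c=\dim A-\dim M$) and $\Omega_A|_M$ is locally free of rank $d=\dim A$. First I would record that in $K_0(M)[\![t]\!]$, with the stated weight-$1$ action on $\LL_M$, the derived exterior algebra $\Lambda\udot\,\LL_M$ has a well-defined K-theory class precisely because the two terms are locally free.

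Next I would compute the derived exterior powers of a two-term complex of locally free sheaves. The key algebraic fact is the décalage isomorphism: for a two-term complex $\LL=[V_{-1}\to V_0]$, the derived exterior powers are computed by the total complex
\[
\Lambda^n\LL\ \simeq\ \bigoplus\nolimits_{p+q=n}\Lambda^pV_0\otimes\Gamma^qV_{-1}[q]\ \simeq\ \bigoplus\nolimits_{p+q=n}\Lambda^pV_0\otimes\Sym^qV_{-1}[q],
\]
where the shift $[q]$ reflects that $V_{-1}$ sits in homological degree $-1$ and its divided/symmetric powers pick up the correct degree. Here $\Gamma^q\cong\Sym^q$ in the locally free setting over a field of characteristic zero. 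Taking the alternating sum and being careful with the signs coming from the shifts $[q]$ gives, in K-theory,
\[
\Lambda\udot\,\LL_M\=\Lambda\udot\,\Omega_A\big|_M\otimes\Sym\udot\,\big(I/I^2\big),
\]
where the sign $(-1)^q$ from $[q]$ has been absorbed into the symmetric algebra factor becoming a genuine $\Sym\udot$ rather than $\Lambda\udot$. Tracking the $\C^*$-weights, each $\Omega_A|_M$ factor carries weight $1$ and each $\Sym^q(I/I^2)$ factor carries weight $q$, matching the conventions in \eqref{FuK}.

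Finally I would identify $\Sym\udot\,(I/I^2)$ with $\bigoplus_{i\ge0}I^i/I^{i+1}$. This is exactly the statement that the cone $C_MA$ is a vector bundle in the lci case: for a regular embedding the associated graded ring $\bigoplus_{i\ge0}I^i/I^{i+1}$ is the symmetric algebra on the conormal bundle $I/I^2$, so $C_MA\cong N_{M/A}$ and the two graded $\cO_M$-algebras agree termwise. Substituting this into the previous display yields
\[
\Lambda\udot\,\LL_M\=\Lambda\udot\,\Omega_A\big|_M\otimes\Big(\bigoplus\nolimits_{i\ge0}I^i\big/I^{i+1}\Big)\=\Lambda_M,
\]
with matching weights, which is the claim.

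The main obstacle I expect is the décalage step: getting the derived exterior powers of the two-term complex right, including the correct homological shifts and the resulting signs in the alternating sum, so that the $V_{-1}=I/I^2$ contribution genuinely assembles into $\Sym\udot$ rather than $\Lambda\udot$. One must verify that the weight-$1$ action on $\LL_M$ induces weight $q$ (not weight $-q$ or some other shift) on the $\Sym^q(I/I^2)$ factor, which is where the compatibility with the weight conventions of \eqref{FuK} is checked and where a sign error would silently destroy the polynomiality. Once the décalage identity is correctly stated, the remaining two identifications are standard lci facts and require no further work.
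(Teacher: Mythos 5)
Your proof is correct, and its skeleton matches the paper's: both use the lci facts that $\LL_M$ is represented by the two-term complex $[I/I^2\to\Omega_A|_M]$ and that $\Sym^i(I/I^2)\cong I^i/I^{i+1}$, and both pass through the key identity $\Lambda\udot\;\LL_M=\Lambda\udot\;\Omega_A|_M\otimes\Sym\udot(I/I^2)$. Where you genuinely differ is in how that identity is obtained. The paper works purely with K-theory classes: from the exact triangle $\LL_A|_M\to\LL_M\to\LL_{M/A}$ it writes $[\LL_M]=[\Omega_A|_M]-[I/I^2]$ and then invokes the formal inversion identity $\Lambda\udot E=1/\Sym\udot E$ established in Section \ref{kFul} (valid in the completion $K_0(M)[\![t]\!]$ because of the positive weights) to turn the subtracted conormal bundle into $\Sym\udot(I/I^2)$. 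You instead compute the derived exterior powers of the two-term complex directly by d\'ecalage, so that $\Sym\udot(I/I^2)$ emerges with its plus signs from the cancellation of the alternating sign against the $(-1)^q$ coming from the shift $[q]$, with no inversion and no completion identity needed. Your route is longer but buys something real: since $\Lambda\udot\;\LL_M$ is \emph{defined} as the alternating sum of derived exterior powers, the paper's one-line step tacitly uses that this alternating sum depends only on the class $[\LL_M]\in K_0(M)[\![t]\!]$, and your d\'ecalage computation proves exactly that compatibility for the complex at hand; conversely, the paper's argument is shorter and stays entirely inside the lambda-ring formalism it has already set up. Two small points: the d\'ecalage decomposition holds as a filtration with those graded pieces rather than as a literal direct sum of shifted complexes, which is all that K-theory requires; and your appeal to the standard regular-embedding fact for $\Sym\udot(I/I^2)\cong\bigoplus_{i\ge0}I^i/I^{i+1}$ replaces the paper's quicker observation that a surjection from a locally free sheaf onto a sheaf of the same rank is an isomorphism --- both are fine.
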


\begin{proof}
For $M$ lci we have $\LL_{M/A}=I/I^2[1]$ so the exact triangle $\LL_A|_M\to\LL_M\to\LL_{M/A}$ gives, in K-theory,
$$
\LL_M\=\Omega_A\big|_M-I/I^2.
$$
Using the weight one $\C^*$ action on $\LL_M$ this gives
$$
\Lambda\udot\;\LL_M\=\Lambda\udot\;\Omega_A\big|_M\otimes\Sym\udot I/I^2\ \in\ K_0(M)[\![t]\!].
$$
Furthermore $I/I^2$ is locally free so
$$
\Sym^iI/I^2\To I^i/I^{i+1}
$$
is a surjection from a locally free sheaf to a sheaf of the same rank. It is therefore an isomorphism and we have
\[
\Lambda\udot\;\LL_M\=\Lambda\udot\;\Omega_A\big|_M\otimes\Big(\bigoplus\nolimits_{i\ge0}\,I^i\big/I^{i+1}\Big). \qedhere
\]
\end{proof}

By Theorem \ref{thm} this means $\Lambda\udot\;\LL_M$ is in fact polynomial in $t$, so we can set $t=1$ to get a class in non-equivariant K-theory. 

However it does \emph{not} follow (and indeed is not in general true) that the push forward of $\Lambda_M$ can be equated with the algebraic de Rham complex \eqref{ahat} when $M$ is lci. Firstly, the de Rham differential does not preserve the $\C^*$ action we have used, so we cannot lift Illusie's theorem to \emph{equivariant} K-theory. Secondly, this therefore gives us convergence issues; Illusie and Bhatt use the ``Hodge completion" of the derived de Rham complex to get their quasi-isomorphism, and this differs from our completion.

\section{A formula for the virtual structure sheaf}
\label{kFul2}

The foundations of cohomological virtual cycles are laid down in \cite{BF, LT}; we use the notation from \cite{BF}. The foundations for K-theoretic virtual cycles (or ``\emph{virtual structure sheaves}") are laid down in \cite{CFK, FG}; we use the notation from \cite{FG}.

Again let $M$ be a quasi-projective scheme over $\C$. A perfect obstruction theory $E\udot\to\LL_M$ is a 2-term complex of vector bundles $E\udot=\{E^{-1}\to E^0\}$ with a map in $D(\mathrm{Coh}\,M)$ to the cotangent complex\footnote{Or its $\tau^{[-1,0]}$ truncation.} $\LL_M$ which is an isomorphism on $h^0$ and a surjection on $h^{-1}$.

We sometimes call $E\udot$ the \emph{virtual cotangent bundle} $\LL_M^{\!\vir}$ of $M$. Its rank is the \emph{virtual dimension} $\vd:=\rk E^0-\rk E^{-1}$.

By \cite{BF} this data defines a cone $C\subset E_1:=(E^{-1})^*$ from which we may define $M$'s \emph{virtual cycle}
$$
\big[M\big]^{\vir}\ :=\ \iota^![C]\ \in\ A_{\vd}(M),
$$
where $\iota\colon M\to E_1$ is the zero section. Siebert \cite{Sie} proved the alternative formula
$$
[M]^{\vir}\=\left[s\((E\udot)^\vee\)\cap c_F(M)\right]_{\vd}.
$$\smallskip

The K-theoretic analogue of $[M]^{\vir}$ is the \emph{virtual structure sheaf} \cite{FG}
\beq{vss}
\cO_M^{\vir}\ :=\ \big[L\iota^*\,\cO_C\big]\ \in\ K_0(M),
\eeq
where $L\iota^*\,\cO_C$ is a bounded complex because $\iota$ is a regular embedding.
%
%

The construction of Section \ref{kFul} allows us to give a K-theoretic analogue.

\begin{thm} \label{KSie} The virtual structure sheaf \eqref{vss} can be calculated in terms of the K-theoretic Fulton class $\Lambda_M$ \eqref{FuK} as
\beq{formula}
\cO^{\vir}_M\=\left[\frac{\Lambda_M}{\Lambda\udot(E\udot)}\right]_{t=1}\=
\left[\frac{\Lambda_M}{\Lambda\udot\;\LL_M^{\!\vir}}\right]_{t=1}.
\eeq
\end{thm}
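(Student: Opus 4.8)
The plan is to fix a global embedding $M\subset A$ into a smooth quasi-projective variety, to describe the Behrend--Fantechi cone $C\subset E_1=(E^{-1})^*$ explicitly in terms of the normal cone $C_MA$, and then to compute $\cO^{\vir}_M=L\iota^*\cO_C$ by exactly the Koszul bookkeeping that proves the Lemma. The engine is the following elementary principle, which also reproves the Lemma: for any cone $K$ inside a vector bundle $\pi_V\colon V\to M$ of negative weight, with zero section $\iota_V$, one has $L\iota_V^*\cO_K=\Lambda\udot V^*\otimes\pi_{V*}\cO_K$ in $K_0(M)[\![t]\!]$. Indeed $\iota_{V*}\cO_M=\pi_V^*\Lambda\udot V^*$ by the Koszul resolution of the zero section, so $(\pi_V^*)^{-1}\beta=\pi_{V*}\big(\beta\otimes\pi_V^*\Lambda\udot V^*\big)$ for every $\beta$ (using $\pi_{V*}\cO_V=\Sym\udot V^*=1/\Lambda\udot V^*$), and $\cO^{\vir}_M=L\iota^*\cO_C=(\pi_1^*)^{-1}[\cO_C]$ with $\pi_1\colon E_1\to M$.

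Next I would unwind the Behrend--Fantechi construction of $C$. Writing $E_0:=(E^0)^*$ and $E_\bullet:=(E\udot)^\vee=[E_0\rt{d_E}E_1]$, the obstruction theory gives a closed embedding of the intrinsic normal cone $\mathfrak c_M=[C_MA/T_A|_M]$ into the vector bundle stack $[E_1/E_0]$, and $C$ is its pullback to the atlas $E_1$. Explicitly $C=(C_MA\times_M E_0)/T_A|_M$, where $T_A|_M$ acts through the bundle maps $a\colon T_A|_M\to E_0$ and $b\colon C_MA\to E_1$ dual to the two components $E^0\to\Omega_A|_M$ and $E^{-1}\to I/I^2$ of $E\udot\to\tau_{\ge-1}\LL_M=[I/I^2\to\Omega_A|_M]$, with $C$ mapping to $E_1$ by $(c,e)\mapsto b(c)-d_E(e)$. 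After stabilising $E\udot$ by acyclic complexes $[\,V\rt{\id}V]$ --- which changes neither $\cO^{\vir}_M$ (a standard invariance from \cite{BF,FG}; independence of $A$ is automatic since $C$ is intrinsic) nor the K-theory class $[E\udot]$ governing the right hand side --- I may assume $a$ is a sub-bundle inclusion and that $R:=\coker\!\big(d_E\colon\coker a\to E_1\big)$ is a vector bundle. Gauge-fixing the free $T_A|_M$-action then identifies $C$ with a product $Q'\times_M b_R(C_MA)\subset Q'\oplus R=E_1$, where $Q'\cong\coker a$ is a full linear factor and $b_R\colon C_MA\into R$ is a closed embedding of cones with $\pi_{R*}\cO_{C_MA}=\bigoplus_i I^i/I^{i+1}$.

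With this, the $Q'$-factor contributes $L\iota_{Q'}^*\cO_{Q'}=\cO_M$, so $\cO^{\vir}_M=\big[L\iota_R^*\cO_{C_MA}\big]_{t=1}=\big[\Lambda\udot R^*\otimes\bigoplus_i I^i/I^{i+1}\big]_{t=1}$ by the principle above. The crux is then a single K-theory identity. Since $\coker a$ has class $[E_0]-[T_A|_M]$ and $R$ has class $[E_1]-[\coker a]$, dualising gives $[R^*]=[\Omega_A|_M]+[E^{-1}]-[E^0]$, whence $\Lambda\udot R^*=\Lambda\udot\Omega_A|_M\otimes\Lambda\udot E^{-1}\big/\Lambda\udot E^0$. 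Multiplying by $\bigoplus_i I^i/I^{i+1}$ and recognising $\Lambda_M=\Lambda\udot\Omega_A|_M\otimes\bigoplus_i I^i/I^{i+1}$ together with $\Lambda\udot(E\udot)=\Lambda\udot E^0\big/\Lambda\udot E^{-1}$ yields $\Lambda\udot R^*\otimes\bigoplus_i I^i/I^{i+1}=\Lambda_M\big/\Lambda\udot(E\udot)$, which is the first equality of the Theorem after setting $t=1$; the second equality is just the notation $\LL_M^{\!\vir}=E\udot$.

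The main obstacle is the middle step: pinning down $C$ geometrically and, in particular, arranging by stabilisation that $\coker a$ and $R$ are honest vector bundles and that $b_R$ is a closed embedding with the expected graded pushforward. This rests on the invariance of $\cO^{\vir}_M$ under stabilisation of the obstruction theory, which I would take from \cite{BF,FG}; once $R$ is a genuine bundle the remainder is the formal Koszul computation above. A secondary point worth verifying is that, although $\Lambda_M\big/\Lambda\udot(E\udot)$ is a priori only a power series, its identification with $\Lambda\udot R^*\otimes\bigoplus_i I^i/I^{i+1}$ shows it is regular at $t=1$, so that the specialisation $[\,\cdot\,]_{t=1}$ is well defined.
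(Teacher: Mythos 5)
Your outer layers coincide with the paper's proof: your ``principle'' $L\iota_V^*\cO_K=\Lambda\udot V^*\otimes\pi_{V*}\cO_K$ is exactly the paper's first step (Koszul resolution of the zero section plus the projection formula applied to $\pi_*\iota_*=\id$), and your closing K-theory bookkeeping, $[R^*]=[\Omega_A|_M]+[E^{-1}]-[E^0]$ giving $\Lambda_M/\Lambda\udot(E\udot)$, reproduces the paper's last line. The divergence is the middle step. The paper never splits anything: it cites Siebert's exact sequence of cones
$$
0\To T_A\big|_M\To C_MA\oplus E_0\To C\To 0
$$
from \cite[proof of Proposition 4.4]{Sie} (after replacing $E\udot$ by a quasi-isomorphic complex), and this alone computes the pushforward, because the $T_A|_M$-torsor $C_MA\oplus E_0\to C$ carries a filtration with graded pieces $\Sym^k\;\Omega_A|_M\otimes\cO_C$; hence in completed equivariant K-theory $\big(\bigoplus_iI^i/I^{i+1}\big)\otimes\Sym\udot E^0=\pi_*\cO_C\otimes\Sym\udot\;\Omega_A|_M$, i.e.\ $\pi_*\cO_C=\big(\bigoplus_iI^i/I^{i+1}\big)\otimes\Sym\udot E^0\otimes\Lambda\udot\;\Omega_A|_M$. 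No gauge-fixing, no auxiliary bundle $R$.

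Your middle step, as written, has genuine gaps. First, $R:=\coker\big(d_E\colon\coker a\to E_1\big)$ is not defined: $d_E$ does not descend to $\coker a$, since $d_E\circ a\ne0$ in general --- dually, the composite $E^{-1}\to E^0\to\Omega_A|_M$ equals $E^{-1}\twoheadrightarrow I/I^2\rt{\delta}\Omega_A|_M$, and the differential $\delta$ of $\tau^{[-1,0]}\LL_M$ is nonzero for singular or non-reduced $M$ (e.g.\ $M=\{x^2=0\}\subset\mathbb A^1$, where $\delta(x^2)=2x\,dx\ne0$ in $\cO_M\,dx$). Second, the gauge-fixing to a product $Q'\times_Mb_R(C_MA)$ inside a direct sum $E_1=Q'\oplus R$ needs algebraic splittings of the bundle extensions $0\to T_A|_M\to E_0\to Q'\to0$ and $0\to Q'\to E_1\to R\to0$, plus a ``shear'' making $b$ compatible with the second splitting. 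Such splittings do not exist over general quasi-projective $M$, and stabilising by $[V\rt{\id}V]$ cannot produce them: it adds a trivial direct summand to each extension and leaves the obstructing class in $\Ext^1(Q',T_A|_M)$ untouched. So the ``main obstacle'' you flag is not the invariance of $\cO^{\vir}_M$ under stabilisation (that part is indeed citable from \cite{BF,FG}); it is that the split structure you want is generally unavailable. The repair is to drop the splitting entirely: your own quotient description $C=(C_MA\times_ME_0)/T_A|_M$ --- which is correct, and is precisely the content of Siebert's exact sequence --- already gives the pushforward identity above via the torsor filtration, after which your final computation goes through verbatim and your proposal becomes the paper's proof.
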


In particular, when $M$ is smooth $\Lambda_M=\Lambda\udot\;\Omega_M$ and $[E\udot]=\Omega_M-\mathrm{ob}_M^*$, so \eqref{formula} recovers $\cO^{\vir}=\Lambda\udot\mathrm{ob}_M^*$.

\begin{proof}
By \cite{BF}, the perfect obstruction theory $E\udot\to\LL_M$ induces a cone $C\ \subset\ E_1$
which Siebert \cite[proof of Proposition 4.4]{Sie} shows\footnote{After possibly replacing $E\udot$ by a quasi-isomorphic 2-term complex of vector bundles.} sits inside an exact sequence of cones
\beq{SSES}
0\To T_A\big|_M\To C_MA\oplus E_0\To C\To 0.
\eeq
Here $A$ is any smooth ambient space containing $M$ with ideal $I$, so that $C_MA=\Spec\bigoplus_{i\ge0}I^i/I^{i+1}$.

As before we give the $E_i$ and $T_A|_M$ weight $-1$ (so $E^i$ and $\Omega_A|_M$ have weight $1$) and let $\iota\colon M\into E_1$ and $\pi\colon E_1\to M$ denote the zero section and projection respectively. Then\vspace{-3mm}
$$
\cO^{\vir}\,=\,L\iota^*\cO_C\,=\,\pi_*\iota_*L\iota^*\cO_C\,=\,\pi_*\(\cO_C\stackrel L\otimes\iota_*\;\cO_M\)\,=\,\pi_*\(\cO_C\otimes\Lambda\udot E^{-1}\)
$$
evaluated at $t=1$, by \eqref{vss} and the Koszul resolution of $\iota_*\;\cO_M$. By \eqref{SSES},
$$
\pi_*\;\cO_C\=\Sym\udot E^0\otimes\Big(\bigoplus\nolimits_{i\ge0}\,I^i\big/I^{i+1}\Big)\otimes\Lambda\udot\Omega_A\big|_M\;,
$$
so by \eqref{FuK},
\[
\cO^{\vir}\=\left[\Lambda_M\otimes\Sym\udot E^0\otimes\Lambda\udot E^{-1}\right]_{t=1}\=\left[\Lambda_M\big/\Lambda\udot(E\udot)\right]_{t=1}. \qedhere
\]
\end{proof}

\begin{cor} $\cO_M^{\vir}$ depends only on $M$ and the K-theory class of $E\udot$.
\end{cor}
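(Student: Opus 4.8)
The plan is to read the Corollary off the formula of Theorem \ref{KSie} directly, by checking that each ingredient on the right-hand side of \eqref{formula} is visibly a function only of $M$ and of the K-theory class $[E\udot]\in K_0(M)$. Since Theorem \ref{KSie} gives $\cO^{\vir}_M=\big[\Lambda_M\big/\Lambda\udot(E\udot)\big]_{t=1}$, it is enough to treat the two factors $\Lambda_M$ and $\Lambda\udot(E\udot)$ separately.

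The first factor costs nothing: by Theorem \ref{thm} the K-theoretic Fulton class $\Lambda_M$ is independent of the smooth ambient space $A\supset M$, hence is an invariant of the scheme $M$ alone. For the second factor I would use that $\Lambda\udot$ is multiplicative, $\Lambda\udot(V\oplus W)=\Lambda\udot V\otimes\Lambda\udot W$, which is the Whitney-type identity $\Lambda^k(V\oplus W)=\bigoplus_{p+q=k}\Lambda^pV\otimes\Lambda^qW$. Thus $\Lambda\udot$ descends to a homomorphism from $(K_0(M),+)$ into the units of $K_0(M)[\![t]\!]$, under which $[E\udot]=[E^0]-[E^{-1}]$ is sent to $\Lambda\udot E^0\big/\Lambda\udot E^{-1}=\Lambda\udot(E\udot)$. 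In particular $\Lambda\udot(E\udot)$ depends only on the class $[E\udot]$, and not on the individual bundles $E^0,E^{-1}$ nor on the chosen map $E\udot\to\LL_M$. Feeding both observations back into \eqref{formula} gives the Corollary.

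The one point that deserves care --- and the natural candidate for the main obstacle --- is that the ratio $\Lambda_M\big/\Lambda\udot(E\udot)$ and its specialisation at $t=1$ are genuinely well-defined, since $\Lambda\udot E^0$ is a zero-divisor (a Koszul Euler class) and cannot simply be inverted in $K_0(M)$. This, however, is exactly what the proof of Theorem \ref{KSie} supplies: there the quotient is rewritten as $\Lambda_M\otimes\Sym\udot E^0\otimes\Lambda\udot E^{-1}$, a legitimate element of $K_0(M)[\![t]\!]$ whose value at $t=1$ is $\cO^{\vir}_M$. So no new analysis is required beyond invoking that theorem, and the dependence on $M$ and $[E\udot]$ alone then follows at once.
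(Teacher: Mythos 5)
Your proposal is correct and matches the paper's (implicit) argument: the corollary is read directly off Theorem \ref{KSie}, with $\Lambda_M$ depending only on $M$ by Theorem \ref{thm} and $\Lambda\udot(E\udot)=\Lambda\udot E^0\big/\Lambda\udot E^{-1}$ depending only on $[E\udot]$ by multiplicativity of $\Lambda\udot$ on exact sequences, the quotient being made sense of as $\Sym\udot E^0\otimes\Lambda\udot E^{-1}$ exactly as in the theorem's proof. Your explicit checks of well-definedness are just elaborations of what the paper leaves tacit, not a different route.
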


Of more interest in enumerative K-theory is the \emph{twisted} virtual structure sheaf
$$
\widehat\cO^{\;\vir}_{\!M}=\cO_M^{\vir}\otimes\det(E\udot)^{1/2}
$$
of Nekrasov-Okounkov \cite{NO}. Here we twist by a choice of square root of the virtual canonical bundle $\det(E\udot)=\det E^0\otimes(\det E^{-1})^*$. The above shows it depends only on $M$, the K-theory class of $E\udot$, and the choice of square root (or ``\emph{orientation data}").

\section{K-theoretic Chern classes}\label{KCh}

The K-theory of complex vector bundles is an oriented cohomology theory, so admits a notion of Chern classes. In particular the $r$th Chern class of a rank $r$ bundle $E$ --- the K-theoretic Euler class of $E$ --- is
\beq{KEuler}
c_r^K(E)\ :=\ \Lambda\udot E^*\ \in\ K^0(M).
\eeq
Whenever there exists a transverse section of $E$
this is the class $[\cO_Z]$ of the structure sheaf of its zero locus $Z\subset M$.

Dave Anderson asked if the K-theoretic Fulton class $\Lambda_M$ of \eqref{FuK} could be written in terms of K-theoretic Chern classes and Segre classes.\footnote{See \cite{HIMN, A} for one definition of K-theoretic Segre classes.} I think there is probably no simple formula along these lines, essentially because the formal group law for K-theory is nontrivial, complicating the K-theoretic analogue of the formula \eqref{ce} below. For instance if we simply substitute $c^K$ and $s^K$ into the right hand side of the definition \eqref{FHstar}
\beq{FH*}
c\_F(M)\ :=\ c\(T_A|_M\)\cap s(C_{M/A})\ \in\ A_\bullet(M)
\eeq
we do \emph{not} get $\Lambda_M$.

However if we first note the usual Fulton class $c\_F(M)$ can be re-expressed in terms of a $\C^*$-equivariant Euler class, then there is a nice analogue in $\C^*$-equivariant K-theory. Begin by rewriting \eqref{FH*} as
\beq{upmss}
c\_F(M)\=\left[t^dc_{\frac1t}\(T_A|_M\)\cdot t^{m-d}s_{\frac1t}(C_{M/A})\right]_{t=1}
\eeq
where $m=\dim M,\ d=\dim A$ and $c_t:=\sum t^ic_i,\ s_t:=\sum t^is_i$. Now let $\t$ denote the standard weight one irreducible representation of $\C^*$, so we can recycle the notation $t$ as $t:=c_1(\t)\in H^2(B\C^*)$. This allows us to rewrite the first term in the brackets of \eqref{upmss} as the equivariant Euler class
\beq{ce}
t^dc_{\frac1t}\(T_A|_M\)\=e^{\C^*}(T_A|_M\otimes\t)
\eeq
in the $\C^*$-equivariant cohomology,\footnote{As usual we localise by inverting $t$.} or Chow cohomology, of $M$ (with the trivial $\C^*$ action). By \eqref{KEuler} its K-theoretic analogue is
\beq{first}
t^dc_{\frac1t}\(T_A|_M\)\ \longleftrightarrow\ \Lambda\udot(T_A^*|\_M\otimes\t^{-1}).
\eeq
For the same reason I would like to think of the second term in the brackets of \eqref{upmss} as an \emph{inverse} equivariant Euler class of $C_{M/A}$,
\beq{segE}
t^{m-d}s_{\frac1t}(C_{M/A})\ ``="\ \frac1{e^{\C^*}(C_{M/A}\otimes\t)}\,.
\eeq
When $C_{M/A}$ is a bundle this makes perfect sense, at least. In that case the K-theoretic analogue of \eqref{segE} is, by \eqref{KEuler},
$$
\frac1{\Lambda\udot(C_{M/A}^*\otimes\t^{-1})}\=\Sym\udot(C_{M/A}^*\otimes\t^{-1})\=\bigoplus_{i\ge0}\t^{-i}(I^i/I^{i+1}),
$$
after completing with respect to $\t$ or localising appropriately. So it is natural to think of this as giving the K-theoretic analogue
\beq{second}
t^{m-d}s_{\frac1t}(C_{M/A})\ \longleftrightarrow\ \bigoplus\nolimits_{i\ge0}\,\t^{-i}(I^i\big/I^{i+1})
\eeq
even when $C_{M/A}$ is not a bundle. Substituting (\ref{first}, \ref{second}) into \eqref{upmss} gives
$$
\Lambda\udot(\Omega_A|_M\otimes\t^{-1})\otimes\bigoplus\nolimits_{i\ge0}\,\t^{-i}(I^i\big/I^{i+1}).
$$
Up to replacing $\t$ by $\t^{-1}$ this is precisely our K-theoretic Fulton class \eqref{FuK}.

\bibliographystyle{halphanum}
\bibliography{references}

\bigskip \noindent {\tt{richard.thomas@imperial.ac.uk}} \medskip

\noindent Department of Mathematics \\
\noindent Imperial College London\\
\noindent London SW7 2AZ \\
\noindent United Kingdom

\end{document}